\title[Bi-elliptic Surface]{Smooth Quotients of bi-elliptic surfaces}
\author[Hisao Yoshihara]{}
\newtheorem{theorem}{Theorem}
\newtheorem{corollary}[theorem]{Corollary}
\newtheorem{claim}[theorem]{Claim}
\theoremstyle{definition}
\newtheorem{definition}[theorem]{Definition}
\newtheorem{example}[theorem]{Example}
\theoremstyle{remark}
\newtheorem{remark}[theorem]{Remark}
\begin{document}
\maketitle

\begin{center}

{\sc Hisao Yoshihara}\\
\medskip
{\small{\em Department of Mathematics, Faculty of Science, Niigata University,\\
Niigata 950-2181, Japan}\\
Telephone and fax numbers:  +81 25 262 7443 \\
E-mail:{\tt yosihara@math.sc.niigata-u.ac.jp}}
\end{center}

\bigskip

\begin{abstract}
We consider the quotient $X$ of bi-elliptic surface by a finite automorphism group.  If $X$ is smooth, then it is a bi-elliptic surface or 
ruled surface with irregularity one. As a corollary any bi-elliptic surface cannot be Galois covering of projective plane, hence does not have 
any Galois embedding.

\noindent {\it MSC}: primary 14J99, secondary 14J27  

\noindent {\it Keywords}: Galois covering, bi-elliptic surface, abelian surface 
\end{abstract}

\bigskip

\section{Statement of result}
We consider a covering of surface, i.e., let $X_1$ and $X_2$ be connected normal complex surfaces and 
$\pi : X_1 \longrightarrow X_2$ a finite surjective proper holomorphic map. 
There are lots of studies of the covering. We are interested in the following cases (1) and (2):
\begin{enumerate} 
\item $X_2$ is a rational surface, in particular the projective plane $\mathbb P^2$, for example \cite{fl} 
\item $X_1$ has Kodaira dimension $0$, for example \cite{mu, y2}
\end{enumerate} 
In this note we consider the case where $X_1$ is a bi-elliptic surface and $\pi$ is a Galois covering. The definition of bi-elliptic surface is as follows, which has been called a hyperelliptic surface \cite{su}.

\begin{definition}
A bi-elliptic surface is a surface with the geometric genus zero and having an abelian surface as its unramified covering.
\end{definition}

First we note the following: 

\begin{remark}\label{4}
Let $S$ be a bi-elliptic surface.  If $\pi : S \longrightarrow X$ is a 
Galois covering and $X$ is smooth, then $X$ has no curve with negative self-intersection number. In particular, if $X$ is rational, 
then it is $\mathbb P^2$ or $\mathbb P^1 \times \mathbb P^1$.   
\end{remark}

\begin{proof}
Suppose the contrary. Then, there exists an irreducible curve $C$ in $X$ such that $C^2 <0$. We have that $(\pi^*(C))^2=\deg \pi \cdot C^2 <0$. 
Thus there exists an irreducible component $C'$ in $\pi^*(C)$ with $C'^2 < 0$. which is a contradiction. Because there exists no curve with negative self-intersection number in bi-elliptic surface.  
\end{proof}

Our result is stated as follows:

\begin{theorem}\label{1}
Let $S$ be a bi-elliptic surface and $G$ a finite subgroup of $\mathrm{Aut}(S)$, the automorphism group of $S$. 
Let $X=S/G$ be the quotient surface. If $X$ is smooth, then it is a bi-elliptic surface or a ruled surface with irregularity one.  
\end{theorem}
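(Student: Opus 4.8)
The plan is to bound the two coarse invariants of $X$ and then reduce the whole statement to a single irregularity computation. Since $\pi\colon S\to X$ is a finite Galois cover of the \emph{smooth} surface $X$, the ramification formula gives $K_S=\pi^{*}K_X+R$ with $R\ge 0$; pulling back pluricanonical forms yields injections $H^{0}(X,nK_X)\hookrightarrow H^{0}(S,nK_S-nR)\subseteq H^{0}(S,nK_S)$, and since a bi-elliptic surface has $\kappa(S)=0$ this forces $\kappa(X)\le 0$. On the other hand $H^{0}(X,\Omega^1_X)=H^{0}(S,\Omega^1_S)^{G}$, so $q(X)\le q(S)=1$. By Remark~\ref{4} the surface $X$ carries no curve of negative self-intersection, hence is minimal. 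Running through the classification of minimal surfaces with $\kappa\le 0$ and $q\le 1$, the survivors are $\mathbb{P}^2$, $\mathbb{P}^1\times\mathbb{P}^1$, a (possibly unnodal) K3 or Enriques surface, a bi-elliptic surface, and a geometrically ruled surface over an elliptic curve. Of these, exactly the last two have $q=1$, so the theorem is equivalent to the single assertion that $q(X)=1$, i.e.\ that $G$ acts trivially on $H^{0}(S,\Omega^1_S)$.

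To attack this I would use the Albanese map $a\colon S\to B:=\mathrm{Alb}(S)$, which for a bi-elliptic surface is a smooth elliptic fibre bundle onto an elliptic curve; in particular the generator $\omega$ of $H^{0}(S,\Omega^1_S)$ is the pullback $a^{*}\eta$ of an invariant form $\eta$ on $B$, and is therefore nowhere vanishing. The $G$-action is compatible with $a$, inducing $\bar g(z)=\zeta_g z+t_g$ on $B$, whence $g^{*}\omega=\zeta_g\,\omega$. Thus $q(X)=1$ is equivalent to $\zeta_g=1$ for all $g\in G$, that is, to $G$ acting on $B$ by translations. The goal is then to prove that any $g$ with $\zeta_g\ne 1$ is incompatible with the smoothness of $X$.

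The main obstacle, and the real content of the proof, is precisely this last implication. Because $\omega$ is nowhere zero, at any fixed point $x$ of such a $g$ the differential $dg_x$ has $\zeta_g^{\pm1}\ne 1$ as an eigenvalue, so every curve fixed by $g$ must lie in a fibre of $a$, while the remaining fixed points are isolated with $dg_x=\mathrm{diag}(\lambda,\zeta_g)$ and $\lambda\ne 1$. Since $X=S/G$ is smooth, the Chevalley--Shephard--Todd theorem forces every stabilizer $G_x\subset GL(T_xS)$ to be generated by pseudo-reflections; the task is to show that no such reflection group can contain an element of this diagonal shape without contradicting the global structure of $S$ (its two elliptic fibrations, the absence of rational curves, and the description of $\mathrm{Aut}(S)$ together with its induced action on $B$ and on the fibres). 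I expect the delicate point to be that reflection groups can in principle contain elements with both eigenvalues different from $1$, so that a purely local argument is insufficient; one must check, type by type in the Bagnera--de Franchis list, that the branch curves such a $g$ would create cannot coexist on a surface carrying no curve of negative self-intersection. Once this analysis yields $\zeta\equiv 1$, we get $q(X)=1$ and the theorem follows from the classification recalled in the first paragraph.
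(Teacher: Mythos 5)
Your opening reduction is sound and in fact overlaps with the paper's own first step: the paper likewise bounds $\dim \mathrm{H}^1(X,\mathcal O_X)\le 1$ and $\dim \mathrm{H}^2(X,\mathcal O_X)=0$ (a bound you omit, which is why K3 needlessly survives on your list), gets $\kappa(X)\le 0$, and invokes Remark \ref{4} plus the classification of surfaces. The genuine gap is everything after that. Your proof of the crucial implication --- $X$ smooth $\Rightarrow q(X)=1$, i.e.\ no $g\in G$ with $\zeta_g\ne 1$ --- is not a proof: you describe the fixed-point structure of such a $g$, invoke Chevalley--Shephard--Todd, and then say ``the task is to show'' that no stabilizer generated by pseudo-reflections can contain such an element, conceding yourself that the local analysis is insufficient and that a type-by-type check through the Bagnera--de Franchis list ``would'' be needed. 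That check is never performed, and it is exactly the content of the theorem. Note that Remark \ref{3} of the paper exhibits automorphisms with $\zeta_g\ne 1$ whose quotients are (singular) rational surfaces, so the implication you want really does hinge on smoothness of $X$, and nothing in your outline extracts a contradiction from it. As written, the proposal is a correct reduction plus an unexecuted plan for the hard part.

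For contrast, the paper sidesteps your hard step entirely. It writes $S=\mathbb C^2/\Gamma_1$ via Suwa's crystallographic description, proves that every $g\in G$ lifts to an \emph{affine} map of $\mathbb C^2$ (the partial derivatives of a lift descend to a compact abelian surface, hence are constant), and uses normality of $\Gamma_1$ in the group $\Gamma_2$ uniformizing $X=\mathbb C^2/\Gamma_2$ to force every linear part to commute with $M(\sigma)=\mathrm{diag}(1,e_n)$, hence to be diagonal. Consequently $X$ carries two fibrations $\bar p_i\colon X\to C_i$ onto curves, and --- the key trick --- the element $\sigma\in\Gamma_1\subset\Gamma_2$, which is present no matter what $G$ is, has rotation part $e_n\ne 1$ and therefore produces a \emph{multiple fiber} in one of these fibrations. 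Since the only fibrations of $\mathbb P^1\times\mathbb P^1$ over a curve are the two projections, which have no multiple fibers (and $\mathbb P^2$ admits no fibration over a curve at all), the $q=0$ candidates are eliminated without analyzing the elements of $G$ individually. If you wish to salvage your route you must actually carry out the global stabilizer analysis you postponed; otherwise the efficient fix is to replace that step by the paper's lifting argument, where the single multiple fiber forced by $\sigma$ does all the work.
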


As a corollary we have the following. 

\begin{corollary}
Bi-elliptic surface cannot be a Galois covering of any smooth rational surface. 
\end{corollary}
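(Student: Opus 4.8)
The plan is to run the Enriques--Kodaira classification on $X$, after pinning down its minimality, Kodaira dimension, and irregularity. First I would record three reductions. Since $\pi\colon S\to X$ is a finite surjective morphism of smooth projective surfaces, pullback of pluricanonical forms gives $\kappa(X)\le\kappa(S)=0$. Since $X$ is smooth, $H^0(X,\Omega^1_X)=H^0(S,\Omega^1_S)^G$, so $q(X)\le q(S)=1$. Finally, by Remark~\ref{4} the surface $X$ carries no curve of negative self-intersection, in particular no $(-1)$-curve, so $X$ is minimal. Thus $X$ is a minimal surface with $\kappa(X)\in\{-\infty,0\}$ and $q(X)\in\{0,1\}$.

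Next I would invoke the classification of minimal surfaces. Among minimal surfaces with $q=1$, the only one with $\kappa=0$ is bi-elliptic, and the only ones with $\kappa=-\infty$ are the ruled surfaces over an elliptic curve; both have irregularity one, which is exactly the asserted conclusion. Hence everything reduces to excluding the surfaces with $q(X)=0$: the rational surfaces (by Remark~\ref{4} only $\mathbb P^2$ and $\mathbb P^1\times\mathbb P^1$ survive among minimal rational ones) together with the K3 and Enriques surfaces. Abelian surfaces and ruled surfaces over a base of genus $\ge 2$ are already excluded by $q(X)\le 1$. Equivalently, the heart of the matter is to prove that $G$ acts trivially on the one-dimensional space $H^0(S,\Omega^1_S)=\mathbb C\,\omega$, i.e. that the character $\psi\colon G\to\mathbb C^{\times}$ defined by $g^*\omega=\psi(g)\,\omega$ is trivial.

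To analyse $\psi$ I would pass to the Albanese. Writing $\alpha\colon S\to E=\mathrm{Alb}(S)$ for the Albanese fibration (with elliptic fibre $F$), the form $\omega$ is the pullback of a translation-invariant $1$-form on $E$, and $\psi(g)$ is precisely the multiplier of the induced automorphism $\mathrm{Alb}(g)$ on $E$; thus $\psi(g)\neq 1$ exactly when $\mathrm{Alb}(g)$ is a genuine rotation of $E$. If $\psi$ is trivial, then the image $\overline G$ acts on $E$ by translations, $E/\overline G$ is again an elliptic curve, the $G$-equivariant map $\alpha$ descends to a nonconstant morphism $X\to E/\overline G$, and $q(X)=1$, finishing the proof. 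So the decisive step is to show that a nontrivial rotation on $E$ is incompatible with smoothness of $X$: if some $g$ rotates $E$ about a fixed point, then $g$ preserves the fibre $F$ over that point while scaling the normal (base) direction by $\psi(g)\neq 1$, and an analysis of the finite-order automorphism $g|_F$ of the elliptic fibre should produce a point of $S$ at which the tangent representation of the stabiliser has both eigenvalues different from $1$ and is not generated by pseudo-reflections, so that by the Chevalley--Shephard--Todd criterion $X=S/G$ acquires a quotient singularity there.

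I expect this last step to be the main obstacle, and the one demanding real care. The difficulty is that an offending isolated fixed point can sometimes be absorbed by additional pseudo-reflections in the stabiliser -- for instance two commuting involutions can combine to act as $-\mathrm{id}$ with still-smooth quotient $\mathbb C^2/\{\pm1\}^2$ -- so one cannot rule out smoothness by inspecting a single element in isolation. Carrying the argument through therefore seems to require descending to the abelian cover $A\cong E_1\times E_2$, lifting $G$ to the affine group normalising $G_0=\mathrm{Gal}(A/S)$, and checking, type by type against the Bagnera--De Franchis list, that the reflection subgroups forced by smoothness cannot coexist with a nontrivial rotation on the Albanese base. Making this case analysis close uniformly -- rather than leaving open a smooth quotient with $q(X)=0$ -- is, I expect, the delicate core of the theorem.
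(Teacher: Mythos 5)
Your proposal is not a proof: by your own account the decisive step is missing. The reductions you begin with are correct and routine --- $\kappa(X)\le\kappa(S)=0$ by pulling back pluricanonical forms, $q(X)\le q(S)=1$ via $H^0(X,\Omega^1_X)=H^0(S,\Omega^1_S)^G$, and minimality of $X$ via Remark~\ref{4} --- and your reformulation is a clean way to package all the $q=0$ cases at once: if the character $\psi\colon G\to\mathbb C^{\times}$ on $H^0(S,\Omega^1_S)$ is trivial, then $X$ fibres over the elliptic curve $E/\overline{G}$, so $q(X)=1$ and $X$ is not rational. But the converse half, that a nontrivial $\psi(g)$ (a rotation on the Albanese base) is incompatible with smoothness of $X=S/G$, is precisely what you never prove. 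You observe yourself that it cannot be settled by inspecting one element at a time (your example of $-\mathrm{id}$ lying in a group generated by pseudo-reflections with smooth quotient shows why), and you defer the repair to an unexecuted case analysis over the Bagnera--De Franchis list, which you explicitly call the ``delicate core'' and leave open. So the implication ``$X$ smooth $\Rightarrow\psi$ trivial'' is asserted and flagged as hard, not established, and the corollary does not follow from what is written.

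It is worth seeing how the paper avoids this trap, because the fix is a different invariant, not a finer fixed-point analysis. The paper obtains the corollary as an immediate consequence of Theorem~\ref{1}, and in the proof of that theorem the rotation is converted not into a singularity of $X$ --- which, as you correctly note, need not exist --- but into a multiple fibre. Concretely: each $g\in G$ lifts to an affine transformation $\widetilde{g}z=M(g)z+v(g)$ of $\mathbb C^2$; normality of $\Gamma_1$ in $\Gamma_2$ forces $M(g)$ to commute with $M(\sigma)=\mathrm{diag}(1,e_n)$, hence to be diagonal; so the whole group $\Gamma_2$ with $X=\mathbb C^2/\Gamma_2$ acts diagonally, and the two coordinate projections descend to two fibrations $\bar{p}_i\colon X\to C_i$. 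The eigenvalue $e_n\ne 1$ (always present, coming from $\sigma$ itself) then produces a multiple fibre of $\bar{p}_2$. This datum survives smoothness of $X$: Remark~\ref{4} leaves $\mathbb P^2$ and $\mathbb P^1\times\mathbb P^1$ as the only rational candidates, $\mathbb P^2$ admits no fibration over a curve at all, and the only fibrations of $\mathbb P^1\times\mathbb P^1$ are the two projections, which have no multiple fibres --- contradiction. If you want to complete your own argument, replacing ``quotient singularity at a fixed point'' by ``multiple fibre of the induced fibration'' is exactly the substitution that makes it close.
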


Miranda \cite{mi} presents a construction of bi-elliptic surface as covering of  rational elliptic  
surface. 
We  present the other examples. 

\begin{example}
Let $A$ be the abelian surface with the period matrix 
\[
\left(\begin{array}{@{\,}rrrc@{\,}}
1 & 0 & \lambda & 0 \\
0 & 1 & 0 & e_n
\end{array}\right), 
\]
where $ \Im \lambda >0$ and $e_{n}=\exp(2\pi\sqrt{-1}/n ), \ (n=4,\ 6)$.
Let $\sigma_i \ (i=1,\ 2)$ be the automorphism of $A$ induced by 
\[
\sigma_1z= \left(\begin{array}{@{\,}rc@{\,}}
1 & 0 \\
0 & e_{n}
\end{array}\right)z + \left(\begin{array}{@{\,}r@{\,}}
1/n \\
0
\end{array}\right)  \ \mathrm{and} \ \sigma_2z =\left(\begin{array}{@{\,}rc@{\,}}
1 & 0 \\
0 & e_{n}
\end{array}\right)z,  
\]
where $z \in \mathbb C^2$. Then $S=A/\langle \sigma_1^m \rangle \ (n=2m)$ and $X=A/\langle \sigma_1 \rangle$  are bi-elliptic surfaces. Note 
that $\pi : S \longrightarrow X$ is a double covering. On the other hand, we have $\sigma_1^m \sigma_2=\sigma_2 \sigma_1^m$, hence $\sigma_2$ induces an automorphism $\overline{\sigma}_2$ on $S$. 
It is easy to see that $S/\langle \overline{\sigma}_2 \rangle$ is isomorphic to $E \times \mathbb P^1$, where $E$ is an elliptic curve with the period matrix $(1/2, \lambda)$. 
\end{example}

\begin{remark}\cite[Example 2.4]{y1}\label{3}
Let $A_i \ (i = 1, 2)$ be the abelian surface defined by the following period matrix:
 \[
  \begin{array}{c}
   \Omega_1 \ = \ \left(
                 \begin{array}{cccc}
                 1 & 0 & \omega & 0 \\
                 0 & 1 & 0 & \omega
                 \end{array}
                 \right) \ {\rm and} \ \  
   \Omega_2 \ = \ \left(
                 \begin{array}{cccc}
                 1 & 0 & (\omega - 1)/3 & 0 \\
                 0 & 1 & (\omega - 1)/3 & \omega 
                 \end{array}
                 \right), \ {\rm respectively}, 
  \end{array}
 \]
  where $\omega = \exp(2\pi \sqrt{-1}/3)$. Let $\sigma_i$ be the automorphism of $A_i$ defined by 
\[
\sigma_1z= \left(\begin{array}{@{\,}rc@{\,}}
1 & 0 \\
0 & \omega
\end{array}\right)z + \left(\begin{array}{@{\,}r@{\,}}
(\omega + 2)/3 \\
0
\end{array}\right)  \ \ \mathrm{and} \ \ \sigma_2z =\left(\begin{array}{@{\,}rc@{\,}}
1 & 0 \\
0 & \omega
\end{array}\right)z,  
 + \left(\begin{array}{@{\,}r@{\,}}
1/3 \\
0
\end{array}\right) 
\]

Then $\sigma_{i}^{3} = id$ on $A_i$ and $S_i = A_i/\sigma_i$ is a bi-elliptic surface. Moreover letting 
\[
\tau_1z= \left(\begin{array}{@{\,}rc@{\,}}
\omega & 0 \\
0 & \omega
\end{array}\right)z \ \ \mathrm{and} \ \ \sigma_2z =\left(\begin{array}{@{\,}rc@{\,}}
\omega & 0 \\
0 & \omega
\end{array}\right)z 
 + \left(\begin{array}{@{\,}r@{\,}}
2/3 \\
0
\end{array}\right) 
\]
we see that $\tau_i$ defines an automorphism of $S_i$ and $S_i/\tau_i$ turns out to be a rational surface.  
Note that the rational surfaces in these examples have singular points. 
\end{remark}

We have defined Galois embedding of algebraic variety and applied it to study several varieties. 
In particular we have shown that many abelian surfaces have Galois embeddings \cite{y2}. 
However, as a corollary of Theorem \ref{1}, we have the following.  

\begin{corollary}\label{2} 
There does not exist any Galois embedding of bi-elliptic surface. 
\end{corollary}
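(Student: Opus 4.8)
The plan is to derive the statement directly from Theorem~\ref{1} together with the definition of a Galois embedding. Recall that a Galois embedding of a smooth projective surface $V$ is a projective embedding $V \hookrightarrow \mathbb{P}^N$ for which there exists a linear subspace $W \cong \mathbb{P}^{N-3}$, disjoint from $V$, such that the projection with center $W$ gives a finite \emph{Galois} covering $\pi_W : V \longrightarrow \mathbb{P}^2$. Thus, to rule out Galois embeddings of a bi-elliptic surface $S$, it suffices to show that $S$ admits no Galois covering of $\mathbb{P}^2$ — which is precisely the content of the Corollary preceding this one, since $\mathbb{P}^2$ is a smooth rational surface.

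First I would make the reduction to Theorem~\ref{1} explicit. The Galois group $G$ of the covering $\pi_W$ is, by definition, the Galois group of the function-field extension $\mathbb{C}(V)/\mathbb{C}(\mathbb{P}^2)$, so a priori its elements are only birational self-transformations of $V$. Here I would use that a bi-elliptic surface is a minimal surface of Kodaira dimension zero: on such a surface every birational self-map is a biregular automorphism. Hence $G$ is a finite subgroup of $\mathrm{Aut}(S)$ and $X = S/G \cong \mathbb{P}^2$.

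Now Theorem~\ref{1} applies: since $X = S/G$ is smooth, it must be a bi-elliptic surface or a ruled surface with irregularity one. But $\mathbb{P}^2$ is rational with irregularity zero, so it is neither; this contradiction shows that no such Galois embedding can exist. The only real point requiring care — and the step I expect to be the main obstacle — is the passage from the abstract Galois group of the field extension to an honest subgroup of $\mathrm{Aut}(S)$, i.e.\ justifying that the birational automorphisms realizing the Galois action are in fact biregular. Once minimality of $S$ is invoked to settle this, the conclusion is an immediate application of Theorem~\ref{1} (equivalently, of its Corollary).
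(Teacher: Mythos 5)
Your proposal is correct and follows essentially the same route as the paper: the paper treats this corollary as an immediate consequence of Theorem~\ref{1} (via the preceding corollary that a bi-elliptic surface cannot Galois-cover a smooth rational surface), since a Galois embedding by definition yields a finite Galois covering $S \to \mathbb{P}^2$. Your extra care in promoting the Galois group from birational maps to honest automorphisms is a legitimate fill-in (and your minimality argument works); one could also note it follows directly from the fact that $S$, being normal and finite over $\mathbb{P}^2$, is the normalization of $\mathbb{P}^2$ in $\mathbb{C}(S)$, so the Galois action is automatically biregular.
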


\section{Proof} 
Let $S$ be a bi-elliptic surface and $\pi_1 : S \longrightarrow X$ a finite Galois covering. 
Then $S$ can be expressed  as $\mathbb C^2/\Gamma_1$, where $\Gamma_1$ is the complex crystallographic group given in \cite[Theorem]{su}. 
Let $A=\mathbb C^2/\Gamma_0$ be the abelian surface, where $\pi_2 : A \longrightarrow S$ is an unramified covering such that  
$\Gamma_0$ is the normal subgroup of $\Gamma_1$ and $|\Gamma_1 : \Gamma_0|=2, 3, 4 \ \mathrm{or} \  6 $. 
Put $n=|\Gamma_1 : \Gamma_0 |$. 
Let $G$ be the finite subgroup of $\mathrm{Aut}(S)$ such that $X=S/G$. 
For $g \in G$, we let $\widetilde{g}$ be the lift of $g$ on the universal covering $\mathbb C^2$. 

\begin{claim}
The $\widetilde{g}$ turns out to be an affine transformation. 
\end{claim}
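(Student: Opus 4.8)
The plan is to show that the lift $\widetilde{g}$ preserves the lattice of translations hidden inside $\Gamma_1$, and then to force its Jacobian to be constant by a compactness argument. First I would record that, since $\mathbb{C}^2$ is simply connected and is the universal covering of $S=\mathbb{C}^2/\Gamma_1$, the automorphism $g \in G \subset \mathrm{Aut}(S)$ lifts to a biholomorphic automorphism $\widetilde{g}$ of $\mathbb{C}^2$; the lift is globally holomorphic because the covering map is a local biholomorphism, and its inverse is the lift of $g^{-1}$. Because $g$ descends from $\widetilde{g}$, the lift normalizes the group of deck transformations, $\widetilde{g}\,\Gamma_1\,\widetilde{g}^{-1}=\Gamma_1$; that is, conjugation by $\widetilde{g}$ induces an automorphism of the abstract group $\Gamma_1$.

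Next I would isolate the translations inside $\Gamma_1$. Let $\Lambda$ be the subgroup of pure translations of $\Gamma_1$; it is a rank-$4$ lattice containing $\Gamma_0$ with finite index, and it is characteristic in the crystallographic group $\Gamma_1$, being its maximal abelian normal subgroup of finite index. Consequently the conjugation automorphism $\gamma \mapsto \widetilde{g}\,\gamma\,\widetilde{g}^{-1}$ maps $\Lambda$ onto $\Lambda$: for each translation $t_\lambda(z)=z+\lambda$ with $\lambda \in \Lambda$, the conjugate $\widetilde{g}\,t_\lambda\,\widetilde{g}^{-1}$ is again a translation $t_{\lambda'}$ with $\lambda' \in \Lambda$. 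Spelled out, this is the functional equation
\[
\widetilde{g}(z+\lambda)=\widetilde{g}(z)+\lambda' \qquad (z \in \mathbb{C}^2),
\]
so the increment $\widetilde{g}(z+\lambda)-\widetilde{g}(z)=\lambda'$ is independent of $z$.

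Differentiating this identity in $z$ gives $D\widetilde{g}(z+\lambda)=D\widetilde{g}(z)$ for every $\lambda \in \Lambda$; hence each entry of the Jacobian matrix $D\widetilde{g}$ is a $\Lambda$-periodic holomorphic function on $\mathbb{C}^2$ and therefore descends to a holomorphic function on the compact complex torus $\mathbb{C}^2/\Lambda$. Since a holomorphic function on a compact connected complex manifold is constant, $D\widetilde{g}$ equals a constant matrix $M$. Integrating, $\widetilde{g}(z)=Mz+b$ for some $b \in \mathbb{C}^2$, i.e. $\widetilde{g}$ is an affine transformation, as claimed.

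I expect the only delicate point to be the assertion that $\widetilde{g}$ carries translations to translations — equivalently, that the translation lattice is characteristic in $\Gamma_1$, so that the abstract automorphism induced by conjugation cannot move it off the lattice. Once that structural fact is in place, the periodicity of the Jacobian and the ensuing application of the maximum principle on the compact torus $\mathbb{C}^2/\Lambda$ are routine.
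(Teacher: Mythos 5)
Your proof is correct, and its analytic core is exactly the paper's: quasi-periodicity of $\widetilde{g}$ with respect to a full-rank lattice of translations makes the entries of the Jacobian descend to a compact complex torus, hence they are constant and $\widetilde{g}$ is affine. Where you differ is in the group-theoretic step that produces the lattice. You invoke the Bieberbach-type fact that the translation subgroup $\Lambda\subset\Gamma_1$ is characteristic (as the maximal abelian normal subgroup of finite index), so that conjugation by $\widetilde{g}$, being an abstract automorphism of $\Gamma_1$, must carry translations to translations. The paper sidesteps this structural fact with an elementary power trick: for a translation $t\in\Gamma_0$, the conjugate $u=\widetilde{g}\,t\,\widetilde{g}^{-1}$ lies a priori only in $\Gamma_1$, but since $\Gamma_1/\Gamma_0$ is cyclic of order $n$, the $n$-th power $u^n=\widetilde{g}\,t^n\,\widetilde{g}^{-1}$ lies in $\Gamma_0$, i.e.\ is a translation by a vector of $\mathcal L_0$; this gives quasi-periodicity with respect to the sublattice $n\mathcal L_0$, and the Jacobian descends to the torus $\mathbb C^2/\Gamma_0^n$ rather than $\mathbb C^2/\Lambda$. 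What each route buys: the paper's argument is self-contained, using only that $\Gamma_1/\Gamma_0$ has exponent $n$; yours rests on the characteristicness of $\Lambda$, which you rightly flag as the delicate point, but which does hold here and can even be checked directly --- any $\gamma\in\Gamma_1$ with nontrivial linear part $\mathrm{diag}(1,e_n^k)$ cannot commute with a finite-index subgroup of $\Gamma_0$ (such a subgroup spans $\mathbb C^2$ over $\mathbb R$, so the linear part would have to be the identity), whence every abelian finite-index subgroup of $\Gamma_1$ lies in the translation subgroup, making it the unique maximal one and hence characteristic. Two small remarks: your $\Lambda$ does not merely contain $\Gamma_0$ with finite index, it equals $\Gamma_0$, precisely because every element of $\Gamma_1\setminus\Gamma_0$ has nontrivial linear part; and your route yields the slightly sharper conclusion that conjugation preserves $\Gamma_0$ itself (so $M(g)\mathcal L_0=\mathcal L_0$), whereas the paper only gets invariance after passing to $n$-th powers --- either statement suffices for the claim.
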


\begin{proof} 
For each $s \in \Gamma_0^n$, there exists $t \in \Gamma_0$ such that $s=t^n$. Since $\Gamma_0 \subset \Gamma_1$ 
and $\Gamma_1$ is a discrete group, 
we have $u \in \Gamma_1$ such that $\widetilde{g}t=u\widetilde{g}$. 
Hence we get $\widetilde{g}t^n=u^n\widetilde{g}$ and $u^n \in \Gamma_0$. 
This implies $\widetilde{g}s \equiv \widetilde{g} (\hspace{-2mm} \mod \Gamma_0)$. 
Let $\mathcal L_0$ be the lattice defining the abelian surface $A=\mathbb C^2/\Gamma_0$. 
Putting $z={}^t(z_1, z_2) \in \mathbb C^2$,  $\widetilde{g}(z_1, z_2)=(\widetilde{g}_1(z_1, z_2), \ \widetilde{g}_2(z_1, z_2))$, we have  
\[
\widetilde{g}_i(z_1+\alpha_1, z_2+\alpha_2)=\widetilde{g}_i(z_1, z_2)+\beta_i, 
\]
where $i=1, 2$, \ 
${}^t(\alpha_1, \alpha_2) \in n \mathcal L_0$  and ${}^t(\beta_1, \beta_2) \in \mathcal L_0$. 
Hence we get 
\[
s^* \left( \frac{\partial \widetilde{g_i}}{\partial z_j} \right) = \frac{\partial \widetilde{g_i}}{\partial z_j} \ \ \ (i, j = 1, 2). 
\]
This means $\partial \widetilde{g_i}/\partial z_j$ is a holomorphic function on $\mathbb C^2/\Gamma_0^n$, which is also an 
abelian surface. Therefore $\partial \widetilde{g_i}/\partial z_j$ is a constant, i.e., $\widetilde{g}$ is an affine transformation.
\end{proof}

Thus $\widetilde{g}$ has the representation $\widetilde{g}z=M(g)z+v(g)$, where $z \in \mathbb C^2, \ M(g) \in GL(2, \mathbb C)$
and $v(g) \in \mathbb C^2$. 
We use this expression hereafter. 
Let $\Gamma_2$ be the affine transformation group generated by $\{ \ \widetilde{g} \ | \ g \in G  \}$ and $\Gamma_1$. 
Then we  have $X=\mathbb C^2/\Gamma_2$. Since $\pi_1 : S \longrightarrow X$ is a Galois covering, we have 
$\Gamma_0$ (resp. $\Gamma_1$) is a normal subgroup of $\Gamma_1$ (resp. $\Gamma_2$) and $\Gamma_2/\Gamma_1 \cong G$.  
Let $\Gamma_1/\Gamma_0$ be generated by $\sigma$. Then, we have the expression 
$\widetilde{\sigma}z=M(\sigma)z+v(\sigma)$, where 
\[
M(\sigma)=\left(\begin{array}{@{\,}rc@{\,}}
1 & 0 \\
0 & e_n
\end{array}\right) \ \ \mathrm{and}\ \  v(\sigma)=   
\left(\begin{array}{@{\,}c@{\,}}
1/n \\
0
\end{array}\right) 
\]. 

\begin{claim}
The $X$ has two fibrations. 
\end{claim}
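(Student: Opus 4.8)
The plan is to show that, in the coordinates in which $M(\sigma) = \mathrm{diag}(1, e_n)$, every element of $\Gamma_2$ has a \emph{diagonal} linear part; the two coordinate projections will then descend to the two desired fibrations. As preparation I would first record the linear parts occurring in $\Gamma_1$: since $\Gamma_0$ consists of the translations by $\mathcal{L}_0$ and $\Gamma_1/\Gamma_0 = \langle\sigma\rangle$, every element of $\Gamma_1$ has the form $\widetilde{\sigma}^{\,j}\tau$ with $\tau\in\Gamma_0$, and hence has linear part $M(\sigma)^j = \mathrm{diag}(1, e_n^j)$ for some $j$.

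The key step is a conjugation argument. Fix $g\in G$; then $\widetilde{g}\in\Gamma_2$, and because $\pi_1$ is Galois we have $\Gamma_1\triangleleft\Gamma_2$, so the conjugate $\widetilde{g}\,\widetilde{\sigma}\,\widetilde{g}^{-1}$ lies in $\Gamma_1$ (recall $\widetilde{\sigma}\in\Gamma_1$). Reading off linear parts---the linear part of a conjugate is the conjugate of the linear parts---yields $M(g)M(\sigma)M(g)^{-1} = \mathrm{diag}(1, e_n^j)$ for some $j$. Since conjugation preserves the eigenvalue multiset, $\{1, e_n\} = \{1, e_n^j\}$; as $e_n\neq 1$ the eigenvalues $1$ and $e_n$ are distinct, which forces $e_n^j = e_n$ and therefore $M(g)M(\sigma)M(g)^{-1} = M(\sigma)$. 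Thus $M(g)$ centralizes $M(\sigma)$, and because $M(\sigma)$ has distinct eigenvalues its centralizer in $GL(2,\mathbb{C})$ is exactly the subgroup of diagonal matrices; hence $M(g)$ is diagonal.

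To conclude, I would use that $\Gamma_2$ is generated by the $\widetilde{g}\ (g\in G)$ together with $\Gamma_1$, all of which now have diagonal linear part, so every element of $\Gamma_2$ is an affine map $z\mapsto (az_1+c,\, bz_2+d)$. In particular the two foliations $\{z_1 = \mathrm{const}\}$ and $\{z_2 = \mathrm{const}\}$ of $\mathbb{C}^2$ are each $\Gamma_2$-invariant, so the projections $z\mapsto z_1$ and $z\mapsto z_2$ descend to surjections of $X=\mathbb{C}^2/\Gamma_2$ onto curves; being transverse, these give two distinct fibrations.

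I expect the only genuine obstacle to be the eigenvalue bookkeeping in the conjugation step---that is, ruling out the values $j\not\equiv 1\pmod n$ so as to upgrade the statement ``$M(g)$ conjugates $M(\sigma)$ into $\Gamma_1$'' to the stronger ``$M(g)$ commutes with $M(\sigma)$.'' Once that is in hand, the diagonal-centralizer fact and the descent of the two coordinate projections are routine.
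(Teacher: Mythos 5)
Your central step is exactly the paper's argument: the paper uses normality of $\Gamma_1$ in $\Gamma_2$ to write $\widetilde{g}\widetilde{\sigma}\widetilde{g}^{-1}=t\widetilde{\sigma}^{\,r}$ with $t\in\Gamma_0$, compares linear parts to get $M(g)M(\sigma)M(g)^{-1}=M(\sigma)^r$, concludes $r=1$, and then deduces that $M(g)$ is diagonal, so that every element of $\Gamma_2$ has the diagonal-affine form $(*)$. Your eigenvalue bookkeeping ($\{1,e_n\}=\{1,e_n^j\}$ forces $e_n^j=e_n$ since $e_n\neq 1$) is precisely the justification the paper leaves implicit in ``hence $r=1$,'' and your centralizer remark matches the paper's ``it is easy to see that $M(g)$ is a diagonal matrix.'' Up to this point your write-up is correct and, if anything, more careful than the original.

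The gap is in the step you dismiss as routine. Invariance of the coordinate foliations only gives you a set-theoretic quotient map from $X$ to $\mathbb{C}/\Gamma_{2i}$, where $\Gamma_{2i}=\{\,z_i\mapsto \alpha_i z_i+a_i\,\}$ is the induced group of affine maps of one variable; for the target to be a \emph{curve} (a Hausdorff Riemann surface) you must show that $\Gamma_{2i}$ acts properly discontinuously on $\mathbb{C}$, and this is not automatic. The translation subgroup $\Gamma_{0i}$ of $\Gamma_{2i}$ is the projection of the rank-four lattice $\mathcal{L}_0\subset\mathbb{C}^2$ to one coordinate, and for a generic lattice such a projection is a \emph{dense} subgroup of $\mathbb{C}$, in which case the quotient is not a curve and there is no fibration at all. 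This is exactly the point the paper addresses and your proposal skips: referring to Suwa's list of the abelian surfaces that cover bi-elliptic surfaces, the paper observes that in these coordinates each $\Gamma_{0i}$ is itself a lattice, so $\mathbb{C}/\Gamma_{0i}$ is an elliptic curve; since $\Gamma_{0i}$ has finite index in $\Gamma_{2i}$ (because $\Gamma_0$ has finite index in $\Gamma_2$), the group $\Gamma_{2i}$ is discrete, $C_i=\mathbb{C}/\Gamma_{2i}$ is a smooth curve, and the coordinate projections descend to the two fibrations $\bar{p}_i:X\to C_i$. So the conjugation step you flagged as the only obstacle is fine; the genuinely non-routine input is the discreteness of the projected groups, which is where the hypothesis that $S$ is bi-elliptic (via the structure of $\mathcal{L}_0$) must be used.
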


\begin{proof}
Since $\Gamma_1$ is a normal subgroup of  $\Gamma_2$, there exists an integer $r$ such that $\widetilde{g}\widetilde{\sigma}\widetilde{g}^{-1}=t {\widetilde{\sigma}}^r$, where $g \in G$ and $t \in \Gamma_0$. 
This means $M(g)M(\sigma)M(g)^{-1}=M(\sigma)^r$, hence $r=1$. 
Therefore we have $M(g)M(\sigma)=M(\sigma)M(g)$. If we put $\displaystyle{M(g)=\left(\begin{array}{@{\,}rc@{\,}}
a & b \\
c & d
\end{array}\right)}$, it is easy to see that $M(g)$ is a diagonal matrix. 
Hence the lift of each element $\gamma$ of  $\Gamma_2$ can be expressed as 

\[
\widetilde{\gamma}(z)=
\left(\begin{array}{@{\,}cc@{\,}}
\alpha_1 & 0 \\
0 & \alpha_2
\end{array}\right)z  
 + \left(\begin{array}{@{\,}r@{\,}}
a_1 \\
a_2
\end{array}\right). \eqno{(*)}
\]

Since $\mathcal L_0$ is the lattice defining the abelian surface $A$, we have \\
$\Gamma_0=\{ \ \ell \ | \ \ell(z)=z+{}^t(b_1, b_2), \ \mathrm{where} \  {}^t(b_1, b_2) \in \mathcal{L}_0  \}$. \\
Put $\Gamma_{0i}=\{ \ \ell_i \ | \ \ell_i(z_i)=z_i+b_i, \mathrm{where} \ {}^t(b_1, b_2) \in \mathcal{L}_0  \}$. 
Referring to the list of abelian surfaces in \cite{su}, we infer that $\mathbb C/\Gamma_{0i}$ is an elliptic curve. Using the above representation $(*)$, we define 
\[
\Gamma_{2i}=\{ \ \gamma_i \ | \ \gamma_i(z_i)=\alpha_i z_i+a_i, \mathrm{where} \ \gamma \in \widetilde{\Gamma_2}  \}. 
\]
Then $\Gamma_{0i}$ is a subgroup of $\Gamma_{2i}$ with a finite index. Hence $\Gamma_{0i}$ is a discrete subgroup and 
$\mathbb C/\Gamma_{2i} $ is a smooth curve. Thus we have the following diagram 

\newcommand{\mapright}[1]{%
 \smash{\mathop{%
  \hbox to 1cm{\rightarrowfill}}\limits^{#1}}}
\newcommand{\mapleft}[1]{%
 \smash{\mathop{%
  \hbox to 1cm{\leftarrowfill}}\limits_{#1}}}
\newcommand{\mapdown}[1]{\Big\downarrow
 \llap{$\vcenter{\hbox{$\scriptstyle#1\,$}}$ }}
\newcommand{\mapup}[1]{Big\uparrow
 \rlap{$\vcenter{\hbox{$\scriptstyle#1$}}$ }}
\[ \begin{array}{ccc}
\mathbb C^2 & \mapright{p_i} & \mathbb C \\
\mapdown{\pi} & & \mapdown{\pi_i} \\
X=\mathbb C^2/\Gamma_2 & \mapright{\bar{p_i}} & C_i=\mathbb C/\Gamma_{2i}
\end{array}, 
\] 
where $i=1, 2$.  
Consequently we get two fibrations $\bar{p_i} : X \longrightarrow C_i$. 
\end{proof}

Suppose that $X$ is smooth. Then we have 
\[
\dim \mathrm{H}^1(X,\ \mathcal O_X) \le \dim \mathrm{H}^1(S,\ \mathcal O_S) =1,  \ \mathrm{and} \] 
\[  
\dim \mathrm{H}^2(X,\ \mathcal O_X) \le \dim \mathrm{H}^2(S,\ \mathcal O_S) =0 
\]
and the Kodaira dimension of $X$ is less than or equal to $0$. 
By Remark \ref{4} and the classification theorem of algebraic surfaces, we infer that $X$ is $\mathbb P^1 \times \mathbb P^1$, a ruled surface with 
irregularity one or a bi-elliptic surface. 

\begin{claim}
The $X$ cannot be $\mathbb P^1 \times \mathbb P^1$. 
\end{claim}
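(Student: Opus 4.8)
The plan is to translate the statement into a computation of the irregularity and to show that $G$ must fix the Albanese $1$-form of $S$. Since $\pi_1:S\to X$ is a quotient by the finite group $G$, holomorphic forms on $X$ are exactly the $G$-invariant forms on $S$, so the irregularity of $X$ satisfies $\dim\mathrm H^0(X,\Omega^1_X)=\dim\mathrm H^0(S,\Omega^1_S)^G$. Now $dz_1$ descends from $\mathbb C^2$ to $S$ (it is invariant under $\Gamma_1$, because $\widetilde\sigma$ acts on $z_1$ by the translation $z_1\mapsto z_1+1/n$ and $\Gamma_0$ acts by translations) and spans $\mathrm H^0(S,\Omega^1_S)$, whereas $dz_2$ does not descend since $\widetilde\sigma^{\ast}dz_2=e_n\,dz_2$. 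By the representation $(*)$ an element $g\in G$ lifts to $\widetilde g z=\mathrm{diag}(\alpha_1,\alpha_2)z+{}^t(a_1,a_2)$, so $G$ acts on the line $\mathbb C\,dz_1$ through the character $g\mapsto\alpha_1(g)$. Hence $\dim\mathrm H^1(X,\mathcal O_X)=1$ if $\alpha_1\equiv 1$ on $G$ and $=0$ otherwise. Among the three candidates for $X$ only $\mathbb P^1\times\mathbb P^1$ has irregularity $0$ (a bi-elliptic surface and a ruled surface of irregularity one both have irregularity $1$). It therefore suffices to prove that $\alpha_1(g)=1$ for every $g\in G$.

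To constrain $\alpha_1$ I would reuse the normality relation $\widetilde g\,\widetilde\sigma\,\widetilde g^{-1}=t\,\widetilde\sigma$ with $t\in\Gamma_0$ established in the proof that $r=1$. Comparing first coordinates, the left-hand side is $z_1\mapsto z_1+\alpha_1/n$ while the right-hand side is $z_1\mapsto z_1+1/n+b_1$ with ${}^t(b_1,b_2)\in\mathcal L_0$; hence $(\alpha_1-1)/n=b_1\in\Gamma_{01}$, i.e. $\alpha_1\in 1+n\,\Gamma_{01}$. On the other hand $M:G\to GL(2,\mathbb C)$ is a homomorphism into a finite group, so $\alpha_1$ is a root of unity, and since $\widetilde g$ induces an automorphism of $E_1:=\mathbb C/\Gamma_{01}$ it must preserve $\Gamma_{01}$; thus $\alpha_1\in\{\pm1\}$ for a generic $E_1$ and lies among the $4$th or $6$th roots of unity only when $E_1$ admits complex multiplication. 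The remaining point is the arithmetic one of checking that the sole root of unity in $1+n\,\Gamma_{01}$ is $1$. For a generic $E_1$ this amounts to $-2/n\in\Gamma_{01}\cap\mathbb R=\mathbb Z$, which fails for $n=3,4,6$; the only borderline possibility is $n=2,\ \alpha_1=-1$.

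I expect the main obstacle to be the disposal of this borderline case, and I would settle it by a local analysis of fixed points. If $n=2$ and some $g$ has $\alpha_1=-1$, then $\widetilde g z=(-z_1+a_1,\ \alpha_2z_2+a_2)$, and using $\widetilde\sigma z=(z_1+1/2,\,-z_2)$ the element $\widetilde h:=\widetilde g\,(\widetilde\sigma\,\ell)^{-1}\in\Gamma_2$ (for a suitable lattice translation $\ell$) has linear part $M(g)M(\sigma)^{-1}$, which is $-I$ when $\alpha_2=1$ and in general a diagonal matrix with both eigenvalues different from $1$; choosing $\ell$ so that $\widetilde h$ has an isolated fixed point $\widetilde p$, the stabiliser of the image of $\widetilde p$ in $X$ contains this non-quasi-reflection. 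Since $\Gamma_1=\langle\Gamma_0,\sigma\rangle$ contains no quasi-reflection ($\sigma$ is a fixed-point-free glide), the quotient acquires an $A_1$ singularity unless $G$ itself supplies the two compensating quasi-reflections $\mathrm{diag}(-1,1)$ and $\mathrm{diag}(1,-1)$ fixing $\widetilde p$; verifying against the explicit list of bi-elliptic lattices in \cite{su} that this cannot happen while keeping $X$ smooth is the delicate step. Equivalently, and perhaps more cleanly, one observes that any $g$ with $\alpha_1\neq1$ acts on $E_1$ with a fixed point, so the fibration $\bar p_1:X\to C_1$ acquires a multiple fibre over the image of that point, exactly as the elliptic fibration of a bi-elliptic surface does; since every ruling of $\mathbb P^1\times\mathbb P^1$ has only reduced fibres, this contradicts $X\cong\mathbb P^1\times\mathbb P^1$. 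Either route, combined with the irregularity computation, forces $\alpha_1\equiv1$ and hence rules out $\mathbb P^1\times\mathbb P^1$.
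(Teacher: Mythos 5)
Your proposal ultimately succeeds, and its decisive step coincides with the paper's own. The paper argues: in the representation $(*)$ some $\alpha_i\neq 1$ always occurs (indeed $\sigma$ itself, which lies in $\Gamma_1\subset\Gamma_2$, has $\alpha_2=e_n\neq 1$); the corresponding fibration $\bar p_i:X\to C_i$ then has a multiple fibre; and $\mathbb P^1\times\mathbb P^1$ carries no such fibration, because any divisor $D$ with $D^2=0$ is a multiple of a fibre of one of the two projections, so every fibration over a curve is a projection, and the projections have only reduced fibres. Your closing ``equivalently'' observation is exactly this argument applied to $i=1$, and together with your invariant-form computation ($q(X)=\dim \mathrm{H}^0(S,\Omega^1_S)^G$, which equals $1$ when $\alpha_1\equiv 1$ on $G$, versus $q(\mathbb P^1\times\mathbb P^1)=0$) it does cover both cases of your dichotomy, so the skeleton of a correct proof is present. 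What the paper's version buys is economy: since $\sigma$ already supplies $\alpha_2=e_n\neq 1$, no dichotomy and no irregularity computation are needed, and the multiple-fibre contradiction applies unconditionally. What your version glosses over is the classification of fibrations of $\mathbb P^1\times\mathbb P^1$: you assert that the rulings have reduced fibres, but you must also know that $\bar p_1$ is (up to Stein factorization) a ruling, which is precisely the paper's $D^2=0$ step.

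The genuine defect is in the route you present as primary. Your claim that a root of unity other than $1$ can lie in $1+n\,\Gamma_{01}$ only when $n=2$ is false: it is a statement about generic $E_1$, but nothing in the hypotheses forces $E_1$ to be generic, and the paper's own Remark~\ref{3} refutes it. There $n=3$, $\Gamma_{01}=\mathbb Z+\mathbb Z(\omega-1)/3$, and $\tau_2$ has $\alpha_1=\omega$ with $(\omega-1)/3\in\Gamma_{01}$, i.e.\ $\omega\in 1+3\,\Gamma_{01}$; the quotient there is a rational surface with singular points. This example shows something stronger: all of your lattice-theoretic constraints are satisfied while $\alpha_1\neq 1$, so no argument from the affine data alone --- without invoking either the smoothness of $X$ or the fibration structure of $\mathbb P^1\times\mathbb P^1$ --- can force $\alpha_1\equiv 1$. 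That missing input is exactly what your unfinished ``delicate step'' about quasi-reflections and Suwa's list would have to supply, and you leave it unverified. So discard the arithmetic detour and keep the combination that works: the irregularity dichotomy plus the multiple-fibre argument, or, more simply, the paper's direct version using $\alpha_2(\sigma)=e_n\neq 1$.
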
 

\begin{proof}
In the representation above $(*)$ there exists $i$ such that $\alpha_i \ne 1$. 
Then the fiber space $\bar{p_i} : X \longrightarrow C_i$ has a multiple singular fiber. 
Let $F_i$ be the fiber of the projection $\pi_i : \mathbb P^1 \times \mathbb P^1 \longrightarrow \mathbb P^1$. 
Then each divisor $D$ is linearly equivalent to $n_1F_1 + n_2F_2$. If $D^2=0$, then we have $n_1=0$ or $n_2=0$. 
So that there are only two fibrations from $\mathbb P^1 \times \mathbb P^1$ to $\mathbb P^1$, 
which are the first and second projections. 
Neither of projections have any multiple fiber.  
Combining the above assertions we infer readily the conclusion. 
\end{proof}

This completes the proof.

\bibliographystyle{amsplain}

\end{document}